\theoremstyle{plain}
\newtheorem{theorem}{Theorem}
\newtheorem{corollary}[theorem]{Corollary}
\newtheorem{proposition}[theorem]{Proposition}
\theoremstyle{definition}
\newtheorem{remark}[theorem]{Remark}
\newcommand{\CC}{{\mathcal C}}
\newcommand{\Q}{{\mathbb Q}}
\newcommand{\Z}{{\mathbb Z}}
\newcommand{\Ima}{\mathrm{Im}}
\newcommand{\Ker}{\mathrm{Ker}}
\newcommand{\Modr}{\mathrm{Mod}\text{-}}
\newcommand{\End}{\operatorname{End}}
\newcommand{\Hom}{\operatorname{Hom}}
\begin{document}


\title[A Baer-Kaplansky theorem]{A Baer-Kaplansky theorem for modules over principal ideal
domains}
\author{Simion Breaz} \thanks{Research supported by the
CNCS-UEFISCDI grant PN-II-RU-TE-2011-3-0065}
\address{Babe\c s-Bolyai University, Faculty of Mathematics
and Computer Science, Str. Mihail Kog\u alniceanu 1, 400084
Cluj-Napoca, Romania}
\email{bodo@math.ubbcluj.ro}


\subjclass[2000]{16D70,  20K30, 13G05, 16S50}


\begin{abstract}
We will prove that if $G$ and $H$ are modules over a principal
ideal domain $R$ such that the endomorphism rings $\End_R(R\oplus
G)$ and $\End_R(R\oplus H)$ are isomorphic then $G\cong H$.
Conversely, if $R$ is a Dedekind domain such that two $R$-modules
$G$ and $H$ are isomorphic whenever the rings $\End_R(R\oplus G)$
and $\End_R(R\oplus H)$ are isomorphic then $R$ is a PID.
%
%
 \end{abstract}

\keywords{Endomorphism ring, principal ideal domain, cancellation
property}

\maketitle
\date{}

\section{Introduction}

The Baer-Kaplansky Theorem, \cite[Theorem 108.1]{Fu2}, states that
two primary abelian groups with isomorphic endomorphism rings are
necessarily isomorphic. This statement was extended to various
classes of modules (abelian groups), e.g. in \cite{Iv}, \cite{Li},
\cite{RW}, \cite{wlf89}, \cite{wlf62}. However straightforward
examples show that in order to obtain such extensions we need to
impose restrictions on these classes. For instance the
endomorphism rings of the Pr\"ufer group $\Z(p^\infty)$ and of the
group of $p$-adic integers $\widehat{\Z}_p$ are both isomorphic to
the ring $J_p$ of $p$-adic integers. This fact suggests that we
need to restrict to some good classes of modules in order to
obtain a Baer-Kaplansky type theorem. Such a result (valid for
torsion-free modules over valuation domains) was proved in
\cite{wlf62}. It is well known that Baer-Kaplansky Theorem cannot
be extended to torsion-free groups (of rank 1) since there are
infinitely many pairwise non-isomorphic torsion-free groups of
rank 1 whose endomorphism rings are isomorphic to $\Z$, \cite{Ar}.
However, similar results to Baer-Kaplansky Theorem hold for some
special classes of torsion-free groups, see \cite{BIS}. In the
setting of modules over complete valuation domains W. May proved a
theorem, \cite[Theorem 1]{May}, for reduced modules which are
neither torsion nor torsion-free and have a nice subgroup $B$ such
that $M/B$ is totally projective: if $M$ is such a module and $N$
is an \textsl{arbitrary} module such that they have isomorphic
endomorphism rings then $M\cong N$.

The main aim of this note is to prove a Baer-Kaplansky theorem for
arbitrarily modules over principal ideal domains (Theorem
\ref{canc-Z}): if $R$ is (commutative) principal ideal domain then
the correspondence (from the class of $R$-modules to the class of
rings) $$\Phi:G\mapsto \End_R(R\oplus G)$$ reflects isomorphisms
of endomorphism rings. Moreover, this property characterizes
principal ideal domains in the class of Dedekind domains: if $R$
is a Dedekind domain such that the correspondence $\Phi$ reflects
isomorphisms then $R$ is a PID. The restriction to Dedekind
domains is motivated by the fact that these domains have the
\textsl{cancellation property}, i.e. the endofunctor $R\oplus
-:\Modr R\to \Modr R$ on the category of all $R$-modules reflects
isomorphisms:

\begin{theorem} \cite[Proposition 3.6]{Lam}\label{canc-dedekind}
Let $R$ be a Dedekind domain. If $M$ and $N$ are two $R$-modules
such that $R\oplus M\cong R\oplus N$ then $M\cong N$.
\end{theorem}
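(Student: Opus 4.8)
The plan is to run the familiar reduction to a unimodular element and then exploit the fact that a Dedekind domain is hereditary. So let $\phi\colon R\oplus M\xrightarrow{\ \sim\ }R\oplus N$ be an isomorphism and set $u=\phi(e)$, where $e=(1,0)\in R\oplus M$. Since $Re$ is a direct summand of $R\oplus M$ with complement $\{0\}\oplus M$, its image $Ru$ is a direct summand of $R\oplus N$ with a complement $C\cong M$, and $\phi$ restricts to an isomorphism $Re\cong Ru$; thus $u$ is unimodular, i.e.\ there is $\psi\colon R\oplus N\to R$ with $\psi(u)=1$ and $C=\ker\psi$. Writing $b=\psi(1,0)$ (with $(1,0)\in R\oplus N$), $\gamma=\psi|_{\{0\}\oplus N}\in\Hom_R(N,R)$ and $u=(a,n)$, one has $ba+\gamma(n)=1$, and the whole theorem reduces to showing $C\cong N$ --- for then $M\cong C\cong N$.

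First I would peel off $\operatorname{im}\gamma$. The ideal $I:=\gamma(N)\subseteq R$ is a submodule of the projective module $R$, hence projective because $R$ is hereditary --- this is the one place the hypothesis on $R$ is really used --- so the surjection $\gamma\colon N\twoheadrightarrow I$ splits and $N\cong\ker\gamma\oplus I$, with $\gamma$ becoming the projection $N\cong\ker\gamma\oplus I\to I\hookrightarrow R$. Feeding this into $C=\ker\psi=\{(r,x)\in R\oplus N:\ br+\gamma(x)=0\}$ and solving for the $I$-coordinate, a direct inspection yields $C\cong\ker\gamma\oplus(I:_R b)$, where $(I:_R b)=\{r\in R:\ rb\in I\}$. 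So everything comes down to identifying the colon ideal $(I:_R b)$ with $I$.

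That last identification is where the ideal theory of $R$ enters, but only mildly: from $ba+\gamma(n)=1$ with $\gamma(n)\in I$ we get $1\in Rb+I$, so $Rb$ and $I$ are comaximal; if $b=0$ this forces $I=R=(I:_R b)$, while if $b\neq0$ then $b$ is a non-zero-divisor, multiplication by $b$ identifies $(I:_R b)$ with $Rb\cap I$, and for comaximal ideals $Rb\cap I=(Rb)I=bI\cong I$. In either case $C\cong\ker\gamma\oplus I=N$, which finishes it. The step I expect to carry the real weight is the splitting of $\operatorname{im}\gamma$ in the middle paragraph: a priori $C$ is just some submodule of $R\oplus N$ of no obvious shape, and it is exactly heredity that turns it into $\ker\gamma\oplus(I:_R b)$; once that is done, the equality $(I:_R b)\cong I$ needs nothing beyond $R$ being a domain in which comaximal ideals meet in their product, and the surrounding bookkeeping with unimodular elements is valid over any ring.
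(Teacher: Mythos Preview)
The paper does not prove this statement at all; it is simply quoted as \cite[Proposition 3.6]{Lam} and used as a black box in the proof of Theorem~\ref{canc-Z}. So there is no ``paper's own proof'' to compare against.

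That said, your sketch is a correct self-contained argument. The reduction to showing $C=\ker\psi\cong N$ via the unimodular element $u$ is set up properly; the computation $C\cong\ker\gamma\oplus(I:_R b)$ goes through exactly as you describe once you use heredity to split $\gamma\colon N\twoheadrightarrow I$; and the identification $(I:_R b)\cong I$ is handled correctly in both cases $b=0$ and $b\neq 0$, using only that $R$ is a domain and that comaximal ideals satisfy $J\cap K=JK$. One minor comment on your closing sentence: the identity $J\cap K=JK$ for comaximal ideals holds in \emph{any} commutative ring, so the only ingredient beyond heredity that you actually use in the last step is that $R$ is a domain (so that multiplication by a nonzero $b$ is injective). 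This is a nice, elementary route to the result; the reference \cite{Lam} situates it instead in the language of stable range and substitution, but your direct computation avoids that machinery entirely.
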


We need this property in order to obtain that $\Phi$ reflects
isomorphisms (cf. Remark \ref{remark-cancellation}). However, in
order to obtain such a correspondence which reflects isomorphisms
the cancellation property is not enough, as it is proved in
Proposition \ref{prop-frtf} (in contrast with the similar problem
for
subgroup lattices, approached in 
\cite[Lemma 2]{Br-Ca}).

\section{A Baer-Kaplansky theorem}


The main result proved in this note is the following

\begin{theorem}\label{canc-Z}
Let $R$ be a Dedekind domain. The following are equivalent:
\begin{enumerate} \item The ring $R$ is a principal ideal domain;

\item If $G$ and $H$ are $R$-modules such that $G'=R\oplus G$ and
$H'=R\oplus H$ have isomorphic endomorphism rings then $G$ and $H$
are isomorphic.
\end{enumerate}
\end{theorem}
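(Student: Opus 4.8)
The plan is to prove the two implications of Theorem~\ref{canc-Z} separately; the substance lies in $(1)\Rightarrow(2)$, while $(2)\Rightarrow(1)$ will be handled contrapositively by exhibiting explicit counterexamples over a Dedekind domain with nontrivial class group.

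For $(1)\Rightarrow(2)$, assume $R$ is a PID and fix a ring isomorphism $\phi\colon S:=\End_R(G')\to T:=\End_R(H')$, where $G'=R\oplus G$, $H'=R\oplus H$. By the cancellation property (Theorem~\ref{canc-dedekind}) it suffices to prove $G'\cong H'$ as $R$-modules. First I would record that the center of $\End_R(R\oplus G)$ consists exactly of the homotheties $r\cdot\mathrm{id}$ with $r\in R$, and is thus identified with $R$; hence $\phi$ induces a ring automorphism $\sigma$ of $R$ on the centers, and after a standard twist (using $R^{\sigma}\cong R$ as $R$-modules) one reduces to the case in which $\phi$ is an isomorphism of $R$-algebras (automatic when $\operatorname{Aut}(R)=1$, e.g.\ $R=\mathbb{Z}$). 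Next, let $e\in S$ be the idempotent projecting $G'$ onto its canonical summand $R$, put $e'=\phi(e)\in T$ and $N=e'H'=\operatorname{im}(e')$. Since $e'Te'=\phi(eSe)$ is isomorphic as a ring to $eSe=\End_R(R)=R$, we get $\End_R(N)\cong R$; in particular $N$ is indecomposable. The crux is the claim $N\cong R$: granting it, $\phi$ carries the left $S$-module $Se\cong\operatorname{Hom}_R(R,G')\cong G'$ to the left $T$-module $Te'\cong\operatorname{Hom}_R(N,H')\cong\operatorname{Hom}_R(R,H')\cong H'$, so $G'\cong H'$ as $R$-modules (restrict the $T$-module isomorphism to the central copy of $R$), and cancellation gives $G\cong H$.

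To prove $N\cong R$ the key is that $N$ is finitely generated projective: then it is free of some rank $r$ (here I use that $R$ is a PID), and $M_r(R)\cong\End_R(N)\cong R$ forces $r=1$, i.e.\ $N\cong R$. Projectivity of $N$ I would extract from the Peirce decomposition: the two-sided ideal $SeS$ is exactly the set of endomorphisms of $G'$ that factor through a finite power of $R$, and $\phi$ carries it to the analogous two-sided ideal of $T$ of endomorphisms factoring through a finite power of $N$; playing $e$ and the analogously defined idempotent $\bar e=\phi^{-1}(e_{H'})$ off against these ideals, together with the fact that the tautological module $G'$ is a faithful left $S$-module and faithfulness is preserved by $\phi$, forces $N$ (and its companion $\bar eG'$) to be progenerators of $\operatorname{Mod}\text{-}R$. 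This step — pinning down how an arbitrary ring isomorphism $\phi$ must treat the canonical free summand, up to equivalence of idempotents — is the main obstacle; the rest is the cancellation theorem, routine Peirce bookkeeping, and the structure theory of finitely generated modules over a PID.

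For $(2)\Rightarrow(1)$, suppose $R$ is Dedekind but not a PID, so $\operatorname{Cl}(R)\ne0$. I would choose an integer $n\ge2$ and a rank-one projective (line bundle) $L$ with $n[L]\ne0$: this is always possible, since if $\operatorname{Cl}(R)$ has an element of order $\ge3$ one may take $n=2$, and otherwise $\operatorname{Cl}(R)$ is a nontrivial elementary abelian $2$-group and $n=3$ works because $3[L]=[L]\ne0$. Fix a nonprincipal ideal $I$ and a fractional ideal $\mathfrak a$ with $[\mathfrak a]=n[L]+[I]$, and set $G=R^{n-2}\oplus I$, $H=R^{n-2}\oplus\mathfrak a$. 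By the Steinitz classification, $G\not\cong H$ since their determinants $[I]$ and $[\mathfrak a]$ differ (as $n[L]\ne0$). On the other hand $R\oplus G\cong R^{n-1}\oplus I$ and $R\oplus H\cong R^{n-1}\oplus\mathfrak a\cong(R^{n-1}\oplus I)\otimes_R L$, and since $-\otimes_R L$ is a self-equivalence of $\operatorname{Mod}\text{-}R$ it preserves endomorphism rings, giving $\End_R(R\oplus G)\cong\End_R(R\oplus H)$. Hence $\Phi$ does not reflect isomorphisms, which is the contrapositive of $(2)\Rightarrow(1)$.
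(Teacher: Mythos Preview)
Your argument for $(2)\Rightarrow(1)$ is correct and is a pleasant variant of the paper's: the paper uses the Steinitz isomorphism $R^{n-1}\oplus I^{n}\cong I^{n}$ together with $\End_R(I^{n})\cong M_n(R)\cong\End_R(R^{n})$ to force $[I]^n=1$ for all $n\ge 2$, while you instead twist $R^{n-1}\oplus I$ by a line bundle $L$ with $n[L]\neq 0$ and invoke that $-\otimes_R L$ is an autoequivalence. Both routes rest on Steinitz and yield the same conclusion.

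Your argument for $(1)\Rightarrow(2)$, however, has a genuine gap at exactly the point you flag as ``the main obstacle''. You propose to show that $N=e'H'$ is a progenerator by ``playing $e$ and $\bar e=\phi^{-1}(e_{H'})$ off against'' the ideals $SeS$ and $S\bar e S$ together with faithfulness of $G'$. Concretely, to conclude that $N$ is finitely generated projective you would need $e'\in Te_{H'}T$, equivalently $e\in S\bar e S$, i.e.\ that the projection $G'\to R$ factors through a finite power of $M=\bar e G'$; nothing in your outline supplies this. Faithfulness of $G'$ over $S$ is automatic and carries no information across $\phi$, and the equality $\phi(SeS)=Te'T$ only identifies two abstractly defined ideals without locating $e_{H'}$ inside $Te'T$ or $e'$ inside $Te_{H'}T$. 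Since there exist indecomposable $R$-modules $N$ with $\End_R(N)\cong R$ that are not finitely generated projective (already over $R=\mathbb Z$: rank-one torsion-free groups of non-idempotent finite type), you really must use something specific about how $N$ sits inside $R\oplus H$, and your sketch does not do this.

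The paper's proof of $N\cong R$ (their $B$) is quite different and entirely elementary. Assuming $B\not\cong R$, one first shows $\Hom_R(B,R)=0$: any nonzero $\alpha\colon B\to R$ would have image free of rank one (here the PID hypothesis enters), hence would split $B$, contradicting indecomposability. Then $B\subseteq\ker(\pi_R)=H$, so $H=(H\cap K)\oplus B$ and $K\cong R\oplus(H\cap K)$; thus $K$ has a free rank-one summand and therefore $\Hom_R(K,B)\cong\Hom_R(G,R)$ has a summand isomorphic to $B$. But $\Hom_R(G,R)$ embeds in the product $R^{G}$, so $B$ embeds in $R^{G}$ and some coordinate projection gives a nonzero map $B\to R$, a contradiction. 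This is the missing mechanism in your proof; once $B\cong R$ is established, your deduction of $G'\cong H'$ via $Se\cong Te'$ and the paper's deduction via $\Hom_R(B,K)\cong\Hom_R(R,G)$ are equivalent, and both finish with the cancellation theorem.

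A smaller point: your ``standard twist'' reducing to an $R$-algebra isomorphism needs care. Twisting replaces $H$ by $H^{\sigma^{-1}}$, and your argument then yields $G\cong H^{\sigma^{-1}}$ rather than $G\cong H$; one must still argue (or assume) that the automorphism $\sigma$ of $R$ induced on centres can be absorbed. The paper is silent on this as well, and for $R=\mathbb Z$ the issue disappears since $\Aut(\mathbb Z)=1$, but over a general PID you should say explicitly how you dispose of $\sigma$.
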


\begin{proof}
(1)$\Rightarrow$(2) Let $e$ and $f$ be the idempotents in
$\End_R(G')$ which are induced by the direct decomposition
$G'=R\oplus G$. Using the version for principal ideal domains of
\cite[Theorem 106.1]{Fu2}, we observe that there are isomorphisms
$$e\End_R(G')f\cong \Hom_R(G,R)$$ and
$$f\End_R(G')e\cong \Hom_R(R,G)\cong G.$$

If $\varphi:\End_R(G')\to \End_R(H')$ is an isomorphism then the
idempotents $\overline{e}=\varphi(e)$ and
$\overline{f}=\varphi(f)$ induce a direct decomposition
$H'=B\oplus K$, where $B=\overline{e}(H')$ and
$K=\overline{f}(H')$. By \cite[106(d)]{Fu2} there is an
isomorphism $\End_R(B)\cong R$. Moreover, as before, we have the
isomorphisms (of $R$-modules)
$$\Hom_R(K,B)\cong\overline{e}\End_R(H')\overline{f}\cong \Hom_R(G,R),$$
and $$\Hom_R(B,K)\cong\overline{f}\End_R(H')\overline{e}\cong
\Hom_R(R,G)\cong G.$$

\textsl{We claim that $B\cong R$.} Using this and Theorem
\ref{canc-dedekind} we obtain $H\cong K$, and we have $$H\cong
\Hom_R(R,K)\cong \Hom_R(B,K)\cong \Hom_R(R, G)\cong G.$$

In order to prove our claim, suppose that $B\ncong R$. Let
$\alpha:B\to R$ be an $R$-homomorphism. Since $R$ is a PID it
follows that $\Ima(\alpha)\cong R$, hence $\Ker(\alpha)\neq 0$.
Moreover, $\Ima(\alpha)$ is a projective module, hence we have a
direct decomposition $B\cong \Ker(\alpha)\oplus \Ima(\alpha)$. But
$\End(B)\cong R$ has no non-trivial idempotents, hence $B$ is
indecomposable. It follows that $\Ima(\alpha)=0$, hence
$\Hom_R(B,R)=0$.

If we consider the direct decomposition $H'=R\oplus H$ and the
canonical projection $\pi_R:H'\to R$, it follows that $B$ is
contained in $H$, the kernel of $\pi_R$. From $H'=B\oplus K$ we
obtain $H= (H\cap K)\oplus B$. Using the equalities $$K\oplus
B=R\oplus H=R\oplus (H\cap K)\oplus B$$ we deduce that $K\cong
R\oplus (H\cap K)$ (as complements for the direct summand $B$),
hence $K$ has a direct summand isomorphic to $R$. Therefore
$\Hom_R(G,R)\cong \Hom_R(K,B)$ has a direct summand isomorphic to
$B$. Since $R$ is commutative, $\Hom_R(G,R)$ is an $R$-module
which can be embedded as a submodule in the direct product $R^G$
of copies of $R$ (here we view $R^G$ as the set of all maps $G\to
R$, endowed with pointwise addition and scalar multiplication; see
\cite[Exercise 43.1]{Fu}). Therefore we can embed $B$ in $R^G$.
Since $B\neq 0$ it follows that we can find a projection
$\pi:R^G\to R$ such that $\pi(B)\neq 0$. This implies
$\Hom_R(B,R)\neq 0$, a contradiction, and it follows that $B\cong
R$.

\medskip

(2)$\Rightarrow$(1) Let $I$ be a non-zero ideal in $R$. Since $R$
is noetherian and integrally closed we can apply \cite[Theorem
I.3.7]{Fu-Sa} to conclude that $\End_R(I)\cong R$. Moreover, since
$I$ is invertible, we can use Steinitz isomorphism formula,
\cite[p.165]{Fu-Sa}. Therefore, for every positive integer $n$ we
have an isomorphism $(\oplus_{k=1}^{n-1} R)\oplus I^{n}\cong
\oplus_{k=1}^n I$, hence there are ring isomorphisms
$$\End_R((\oplus_{k=1}^{n-1} R)\oplus I^{n})\cong  \End_R(\oplus_{k=1}^n
I)\cong M_n(R)\cong \End_R(\oplus_{k=1}^n R).$$ If $n\geq 2$  we
obtain, from (2), that $(\oplus_{k=1}^{n-2} R)\oplus I^{n}\cong
\oplus_{k=1}^{n-1}R$. Using again the cancellation property of
$R$, Theorem \ref{canc-dedekind}, we conclude that $I^{n}$ is
principal for all $n\geq 2$. If $C(R)$ is the ideal class group
associated to $R$ and $[I]$ is the class of $I$ in this group, it
follows that $[I]^n=1$ for all $n\geq 2$, hence $[I]=1$. Then $I$
is principal and the proof is complete.
\end{proof}

\begin{remark}
From the above proof it follows that if $R$ is a principal ideal
domain then every ring isomorphism $\varphi:\End_R(R\oplus G)\to
\End_R(R\oplus H)$ induces a direct decomposition $R\oplus
H=B\oplus K$ with $B=\varphi(e)(R\oplus H)\cong R$ and
$(1-\varphi(e))(R\oplus H)=K\cong G$, where $e$ is the idempotent
such that $e(R\oplus G)=R$ and $(1-e)(R\oplus G)=G$. Since $B\cong
R$, it is not hard to see, using the same technique as in the
proof for the bounded case of \cite[Theorem 108.1]{Fu2}, that
$\varphi$ is induced by an isomorphism $\psi:R\oplus G\to R\oplus
H$. Therefore the above theorem can be viewed as an improvement of
\cite[Theorem 2.1]{wlf95} for the case of principal ideal domains.
\end{remark}

\begin{remark}
A class $\CC$ of modules is called \textsl{Baer-Kaplansly} if any
two of its modules are isomorphic whenever their endomorphism
rings are isomorphic as rings, \cite[p. 1489]{Iv-Va}. Therefore,
Theorem \ref{canc-Z} says that the class of modules over a
Dedekind domain $R$ which have a direct summand isomorphic to $R$
is a Baer-Kaplansky class if and only if $R$ is a principal ideal
domain. Similar results for other kind of rings were obtained in
\cite[Theorem 8]{Iv} for a similar class, respectively in
\cite[Theorem 4]{Iv-Va} for a particular class of modules over
FGC-rings.
\end{remark}

As a consequence of Theorem \ref{canc-Z} we obtain that locally
free modules over principal ideal domains are determined by their
endomorphism rings. This is also a consequence of \cite[Theorem
A]{wlf62}.

\begin{corollary}
If two locally free modules over a principal ideal domain have
isomorphic endomorphism rings then they are isomorphic.
\end{corollary}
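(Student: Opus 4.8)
The plan is to deduce this directly from Theorem~\ref{canc-Z} by peeling a copy of $R$ off each module. Recall that a module $M$ over $R$ is locally free precisely when every finitely generated submodule of $M$ is contained in a finitely generated free direct summand of $M$; in particular a nonzero locally free $R$-module has a nonzero finitely generated free direct summand. So the first step is to check that any nonzero locally free $R$-module $M$ has a direct summand isomorphic to $R$: choose $0\neq m\in M$, pick a finitely generated free direct summand $F$ of $M$ with $Rm\subseteq F$, note $F\neq 0$, and use that over the PID $R$ one has $F\cong R^{n}$ with $n\geq 1$, so $F\cong R\oplus R^{n-1}$ and hence $M\cong R\oplus M'$ for some $R$-module $M'$.

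Next I would take two locally free $R$-modules $M$ and $N$ with $\End_R(M)\cong\End_R(N)$ and first dispose of the degenerate case: if one of the two modules is zero, its endomorphism ring is the zero ring, which forces the other endomorphism ring, and hence the other module, to be zero as well. So we may assume both are nonzero, and by the first step we write $M\cong R\oplus M'$ and $N\cong R\oplus N'$. Then $\End_R(R\oplus M')\cong\End_R(R\oplus N')$, and since $R$ is a principal ideal domain the implication (1)$\Rightarrow$(2) of Theorem~\ref{canc-Z} (applied with $G=M'$, $H=N'$) yields $M'\cong N'$. Consequently $M\cong R\oplus M'\cong R\oplus N'\cong N$, which is what we want.

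The only point that needs any care is the first step, namely that local freeness together with the structure of finitely generated free modules over a PID really does force a rank-one free summand to split off; once that is in place, the argument is a direct application of Theorem~\ref{canc-Z} (and, through it, of the cancellation property recorded in Theorem~\ref{canc-dedekind}), with no further obstacle. I also expect this to match the alternative derivation from \cite[Theorem A]{wlf62} mentioned above.
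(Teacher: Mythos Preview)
Your argument is correct and follows exactly the route the paper intends: the corollary is stated as an immediate consequence of Theorem~\ref{canc-Z}, and your proof spells out precisely how, by splitting off a copy of $R$ from each nonzero locally free module and then invoking the implication (1)$\Rightarrow$(2).
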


\begin{remark}\label{remark-cancellation}
In the proof of Theorem \ref{canc-Z} we used the cancellation
property of the regular module $R$. If $R$ has not this property
(e.g. there are Dedekind-like domains without cancellation
property, \cite{Ha-Wi}) then there are two $R$-modules $G\ncong H$
such that $R\oplus G\cong R\oplus H$, hence $\End_R(R\oplus
G)\cong \End_R(R\oplus H)$. If we write these endomorphism rings
as matrix rings
$$\End_R(R\oplus G)=\left(\begin{array}{cc}
  \End_R(R) & \Hom_R(G,R) \\
  \Hom_R(R,G) & \End_R(G) \\
\end{array}\right)\cong \left(\begin{array}{cc}
  R & \Hom_R(G,R) \\
  G & \End_R(G) \\
\end{array}\right),$$ respectively $$\End_R(R\oplus H)\cong \left(\begin{array}{cc}
  R & \Hom_R(H,R) \\
  H & \End_R(H) \\
\end{array}\right),$$ we observe that the $(2,1)$-blocks in these
representations are isomorphic to $G$, respectively to $H$. These
two blocks are not isomorphic even the corresponding matrix rings
are isomorphic. It is obvious that in this case Theorem
\ref{canc-Z} is not valid.
\end{remark}

We will prove that we cannot replace in the implication
$(1)\Rightarrow (2)$ of Theorem \ref{canc-Z} the direct summand
$R$ by an arbitrary module wich have the cancellation property.
The following proposition shows that the property of the regular
module $R$ stated in Theorem \ref{canc-Z} is more stronger than
the usual
cancellation property (see \cite[Theorem B]{St}). 

\begin{proposition}\label{prop-frtf}
The following are equivalent for an indecomposable  torsion-free
abelian group $F\neq 0$ of finite rank: \begin{enumerate} \item If
$G$ and $H$ are abelian groups such that $\End(F\oplus G)\cong
\End(F\oplus H)$ then $G\cong H$;

\item $F\cong \Z$.
\end{enumerate}
\end{proposition}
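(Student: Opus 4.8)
The plan is to prove the two implications separately. The implication $(2)\Rightarrow(1)$ is immediate: $\Z$ is a principal ideal domain, hence a Dedekind domain, so Theorem~\ref{canc-Z} applies with $R=\Z$; since $F\cong\Z$ we have $F\oplus G\cong\Z\oplus G$ and $F\oplus H\cong\Z\oplus H$, and an isomorphism $\End_\Z(\Z\oplus G)\cong\End_\Z(\Z\oplus H)$ forces $G\cong H$.

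For $(1)\Rightarrow(2)$ I would argue by contraposition: given $F$ indecomposable, torsion-free of finite rank with $F\not\cong\Z$ (equivalently, $F$ not free), I must produce non-isomorphic abelian groups $G,H$ with $\End(F\oplus G)\cong\End(F\oplus H)$. If $F$ does not have the cancellation property there is nothing to do, since then $F\oplus G\cong F\oplus H$ for some $G\not\cong H$ and a fortiori $\End(F\oplus G)\cong\End(F\oplus H)$; so the content lies in the case that $F$ has cancellation (in particular the rank-one case). The basic tool is the standard reduction: writing $\End(F\oplus X)$ as the generalized matrix ring $\bigl(\begin{smallmatrix}\End F & \Hom(X,F)\\ \Hom(F,X)&\End X\end{smallmatrix}\bigr)$ and following the idempotent $e$ with $e(F\oplus X)=F$, a ring isomorphism $\End(F\oplus G)\cong\End(F\oplus H)$ can be assembled from compatible isomorphisms of rings $\End G\cong\End H$ and of bimodules $\Hom(F,G)\cong\Hom(F,H)$, $\Hom(G,F)\cong\Hom(H,F)$ (respecting the composition pairings of the two Morita contexts). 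The convenient way to get these is to take $G,H$ \emph{orthogonal} to $F$, i.e.\ with $\Hom(F,G)=\Hom(G,F)=\Hom(F,H)=\Hom(H,F)=0$, so that $\End(F\oplus G)\cong\End F\times\End G$ and $\End(F\oplus H)\cong\End F\times\End H$, whence it only remains to arrange $\End G\cong\End H$ together with $G\not\cong H$.

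For the construction itself I expect a case distinction. If there is a prime $q$ for which $F$ is $q$-reduced (i.e.\ $\bigcap_n q^nF=0$) and $F$ does not map onto $\Z[1/q]$, take $G=\Z[1/q]$ and $H$ the rank-one torsion-free group of type "$\infty$ at $q$, $1$ at $q'$, $0$ elsewhere" for an auxiliary prime $q'\neq q$; both are $q$-divisible, so $\Hom(G,F)=\Hom(H,F)=0$ (as $F$ is $q$-reduced), and — using $\Hom(F,\Z)=0$, which holds because $F$ is indecomposable and $F\not\cong\Z$ — one checks $\Hom(F,G)=\Hom(F,H)=0$ as well; since $\End G\cong\Z[1/q]\cong\End H$ and $G\not\cong H$, the endomorphism rings $\End F\times\End G$ and $\End F\times\End H$ are isomorphic. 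In the remaining case — where no such $q$ exists, the prototype being $F=\Q$ — one should be able to show that $F$ is divisible by infinitely many primes; then take $G=\Z$ and $H$ a non-free rank-one group with $\End H=\Z$ whose type is supported on an infinite set of primes at which $F$ is divisible, so that restriction along $\Z\hookrightarrow H$ identifies $\Hom(H,F)$ with $F=\Hom(\Z,F)$ while $\Hom(F,H)=0=\Hom(F,\Z)$, and the Morita contexts of $(F,\Z)$ and $(F,H)$ coincide although $\Z\not\cong H$.

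The point where I expect the real work is in making this dichotomy exhaustive, that is, in showing that for every indecomposable torsion-free $F$ of finite rank with $F\not\cong\Z$ at least one of the two recipes applies: either there is a prime $q$ with $F$ $q$-reduced and $F\not\twoheadrightarrow\Z[1/q]$, or else $F$ is divisible by infinitely many primes. For $F$ of rank one this is transparent from the structure theory of rank-one groups; for $F$ of rank $\ge 2$ it needs a careful argument about the maximal $q$-divisible subgroups $\bigcap_n q^nF$, the rank-one pure subgroups of $F$, and the rank-one torsion-free quotients of $F$. Once the cases are settled, assembling the required ring isomorphism from the matching blocks is routine, since in each recipe at least one off-diagonal Hom-group of each Morita context vanishes, making the composition pairings degenerate.
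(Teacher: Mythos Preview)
Your implication $(2)\Rightarrow(1)$ matches the paper. In $(1)\Rightarrow(2)$, however, your Case~1 construction collapses: the height sequences $(\infty,0,0,\ldots)$ and $(\infty,1,0,0,\ldots)$ (the $1$ at the auxiliary prime $q'$) are \emph{equivalent}, since they differ at a single place by a finite amount, so your $G=\Z[1/q]$ and your $H$ have the same type and are isomorphic. To force $G\not\cong H$ you must let the type of $H$ differ from that of $G$ at infinitely many primes; but then $H$ admits rank-one subgroups that are neither $\cong\Z$ nor $\cong\Z[1/q]$, and your hypothesis ``$F$ does not map onto $\Z[1/q]$'' no longer yields $\Hom(F,H)=0$. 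Independently, the dichotomy you propose is not established for $\mathrm{rank}\,F\ge 2$ and is in fact dubious: an indecomposable $F$ can have nonzero $q$-divisible part $\bigcap_n q^nF$ for \emph{every} prime $q$ (hence never be $q$-reduced) while being $p$-divisible for no prime $p$, so neither of your recipes applies. Your sketch offers no mechanism for this situation.

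The paper avoids both difficulties by a much coarser split: since the maximal divisible subgroup of $F$ is a direct summand and $F$ is indecomposable, either $F\cong\Q$ or $F$ is reduced (with no free summand, as $F\not\cong\Z$). For $F\cong\Q$ one takes any two non-isomorphic subgroups $G,H$ of $\Q$ with $\End G=\End H=\Z$; then $\Hom(\Q,G)=\Hom(\Q,H)=0$ while $\Hom(G,\Q)\cong\Q\cong\Hom(H,\Q)$, and both matrix rings are the triangular ring $\left(\begin{smallmatrix}\Q & 0\\ \Q & \Z\end{smallmatrix}\right)$ --- essentially your Case~2 specialised to $F=\Q$. For $F$ reduced without free summands the paper does not attempt an explicit rank-one construction at all: it invokes Stelzer's theorem \cite{St}, which directly supplies non-isomorphic finite-rank torsion-free groups $G,H$ with $\End G=\End H=\Z$ and $\Hom(F,G)=\Hom(G,F)=\Hom(F,H)=\Hom(H,F)=0$, whence $\End(F\oplus G)\cong\End(F)\times\Z\cong\End(F\oplus H)$. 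This external input is what replaces the ``real work'' you anticipated.
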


\begin{proof}
(1)$\Rightarrow$(2) If $F$ is not isomorphic to $\Z$ then $F\cong
\Q$ or $F$ is a reduced abelian group which has no free direct
summands.

For the case $F\cong \Q$, we can choose $G$ and $H$ two
non-isomorphic subgroups of $\Q$ such that $\End(G)=\End(H)=\Z$.
It is not hard to see that both endomorphism rings $\End(F\oplus
G)$ and $\End(F\oplus H)$ are isomorphic to the matrix ring
$\left(\begin{array}{cc}
  \Q & 0 \\
  \Q & \Z \\
\end{array}\right)$, so $F$ does not verify the condition (1).

If $F$ is a reduced abelian group which has no free direct
summands then we can construct, using \cite[Theorem]{St}, two
(reduced) finite rank torsion-free groups $G$ and $H$ of the same
rank such that
$$\Hom(F,G)=\Hom(F,H)=\Hom(G,F)=\Hom(H,F)=0$$ and $$\End(G)=
\End(H)= \Z.$$ In this case both endomorphism rings $\End(F\oplus
G)$ and $\End(F\oplus H)$ are isomorphic to the ring
$\End(F)\times \Z$, so $F$ does not verify the condition (1).

\medskip

(2)$\Rightarrow$(1) This is a consequence of Theorem \ref{canc-Z}.
\end{proof}


\begin{remark}
There are also versions for the Baer-Kaplansky theorem proved for
automorphism groups, Jacobson radicals or for ring
anti-isomorphisms, \cite{Go}, \cite{HPS}, \cite{Le}, \cite{SSS}.
It would be nice to know if Theorem \ref{canc-Z} is still true if
we consider only automorphism groups or Jacobson radicals.
\end{remark}

\noindent{\textbf{Acknowldgements:}} I would like to thank
professor Luigi Salce for his help to prove the implication
(2)$\Rightarrow$(1) in Theorem \ref{canc-Z}.

 \end{document}